\newcommand{\N}{{\mathbb{N}}}
\newcommand{\Z}{{\mathbb{Z}}}
\newcommand{\F}{{\mathbb{F}}}
\newcommand{\bse}{{\bf e}}
\newcommand{\x}{{\bf x}}
\newcommand{\cP}{{\mathcal{P}}}
\newcommand{\cL}{{\mathcal{L}}}
\newcommand{\cS}{{\mathcal{S}}}
\newcommand{\se}{sequence }
\newcommand{\ses}{sequences }
\newcommand{\dis}{discrepancy }
\newcommand{\con}{construction }
\newcommand{\gff}{global function field }
\newcommand{\gffs}{global function fields }
\newcommand{\qmc}{quasi-Monte Carlo }
\newcommand{\me}{method }
\newcommand{\mes}{methods }
\newcommand{\fif}{finite field }
\newcommand{\abs}[1]{\left| #1 \right|}
\renewcommand{\le}{\leqslant}
\renewcommand{\ge}{\geqslant}
\newtheorem{theorem}{Theorem}
\newtheorem{proposition}{Proposition}
\newtheorem{corollary}{Corollary}
\newtheorem{lemma}{Lemma}
\newtheorem{definition}{Definition}
\newtheorem{remark}{Remark}
\begin{document}
\title
[Halton-type sequences]{Halton-type sequences from global function fields}

\author{HARALD NIEDERREITER and ANDERSON SIANG JING YEO}
\address{Harald Niederreiter,
Johann Radon Institute for Computational and Applied Mathematics, Austrian Academy of Sciences, Altenbergerstr. 69, A-4040 Linz,
Austria, and Department of Mathematics, University of Salzburg, Hellbrunnerstr. 34, A-5020 Salzburg, Austria;
Anderson Siang Jing Yeo, School of Physical and Mathematical Sciences, Nanyang Technological University, 21 Nanyang Link, 
NTU SPMS-MAS-04-20, Singapore 637371, Republic of Singapore}
\email{ghnied@gmail.com (Harald Niederreiter), yeos0041@e.ntu.edu.sg (Anderson Siang Jing Yeo)}

%\thanks{}

\date{\today}

%\dedicatory{}

\keywords{low-discrepancy sequence, $(t,s)$-sequence, Halton sequence, global function field}

\begin{abstract}
For any prime power $q$ and any dimension $s \ge 1$, a new construction of $(t,s)$-sequences in base $q$ using global function
fields is presented. The construction yields an analog of Halton sequences for global function fields. It is the first
general construction of $(t,s)$-sequences that is not based on the digital method. The construction can also be put into the
framework of the theory of $(u,\bse,s)$-sequences that was recently introduced by Tezuka and leads in this way to better
discrepancy bounds for the constructed sequences.
\end{abstract}

\maketitle

\section{Introduction} \label{se1}

The \con of low-\dis \ses is an important problem in number theory and combinatorics and it is also highly relevant for \qmc \mes
in scientific computing (see \cite{DP, N92}). Let us recall the definition of the star \dis and the associated definition of a low-\dis
sequence. Let $s \ge 1$ be a given dimension and let $\cP$ be a point set consisting of the $N$ points $\x_0,\ldots,\x_{N-1}$ in
the $s$-dimensional unit cube $[0,1]^s$. For a subinterval $J$ of $[0,1]^s$, let $A(J;\cP)$ be the number of integers $n$ with
$0 \le n \le N-1$ and $\x_n \in J$. Then the \emph{star \dis} $D_N^*(\cP)$ of $\cP$ is defined by
$$
D_N^*(\cP)=\sup_J \abs{\frac{A(J;\cP)}{N} - \lambda_s(J)},
$$
where the supremum is extended over all subintervals $J$ of $[0,1]^s$ with one vertex at the origin and $\lambda_s$ denotes the
$s$-dimensional Lebesgue measure. For a sequence $\cS$ of points $\x_0,\x_1,\ldots$ in $[0,1]^s$, we define the star \dis $D_N^*(\cS)$
for all $N \ge 1$ by putting $D_N^*(\cS)=D_N^*(\cP_N)$, where $\cP_N$ is the point set consisting of the first $N$ terms $\x_0,\ldots,
\x_{N-1}$ of $\cS$. We say that $\cS$ is a \emph{low-\dis \se} if
\begin{equation} \label{eqdi}
D_N^*(\cS)=O(N^{-1} (\log N)^s) \qquad \mbox{for all } N \ge 2,
\end{equation}
where the implied constant is independent of $N$. This is the smallest possible order of magnitude that can currently be obtained for
the star \dis of a \se of points in $[0,1]^s$. Actually, it is a widely held belief that no smaller order of magnitude can be achieved.

Historically, the first \con of low-\dis \ses for any dimension was that of Halton \ses in~\cite{Ha}. For any integer
$b \ge 2$, let $Z_b=\{0,1,\ldots, b-1\} \subset \Z$ denote the least residue system modulo $b$. Every integer $n \ge 0$ has a unique
digit expansion
$$
n=\sum_{r=0}^{\infty} a_r(n)b^r
$$
in base $b$, where $a_r(n) \in Z_b$ for all $r \ge 0$ and $a_r(n)=0$ for all sufficiently large $r$. The \emph{radical-inverse function}
$\phi_b$ in base $b$ is defined by
$$
\phi_b(n)=\sum_{r=0}^{\infty} a_r(n)b^{-r-1} \in [0,1) \qquad \mbox{for all } n \in \N_0.
$$
As usual, we write $\N_0$ for the set of nonnegative integers and $\N$ for the set of positive integers. Now choose pairwise coprime
integers $b_1,\ldots,b_s \ge 2$ and define the \emph{Halton \se in the bases} $b_1,\ldots,b_s$ as the \se ${\bf h}_0,{\bf h}_1,\ldots$ with
$$
{\bf h}_n=(\phi_{b_1}(n),\ldots,\phi_{b_s}(n)) \in [0,1)^s \qquad \mbox{for } n=0,1,\ldots .
$$
An explicit upper bound of the form~\eqref{eqdi} for the star \dis of this Halton \se can be found in \cite[Theorem 3.36]{DP}. In
practical implementations, the bases $b_1,\ldots,b_s$ are usually taken to be distinct prime numbers.

Modern \mes for the \con of $s$-dimensional low-\dis \ses are based on the theory of $(t,m,s)$-nets and $(t,s)$-\ses which was developed
in~\cite{N87}. This theory has a strong combinatorial flavor; see \cite[Chapter~6]{DP}, \cite[Chapter~4]{N92}, and the article on
$(t,m,s)$-nets in the handbook~\cite{CD}. The currently most powerful constructions of $(t,s)$-\ses use \gffs and quite a number of
such constructions are available; see \cite{HN, MN, NO, NX95, NX96, XN} as well as the surveys in \cite{DP, NX96a, NX98, NX01}.

In this paper we present a new \con of low-\dis \ses using global function fields. The \con is comparatively simple and inspired by the
\con of Halton sequences. It fits very well into the framework of the theory of $(u,\bse,s)$-\ses developed recently by
Tezuka~\cite{T12}. This theory is an extension of the theory of $(t,s)$-\ses and leads in many cases to better upper bounds on the star
discrepancy. The new \con is exceptional in the sense that it is the first general \con of $(t,s)$-\ses and $(u,\bse,s)$-\ses that is
not based on the digital method, which so far was the standard \me of constructing $(t,s)$-\ses and $(u,\bse,s)$-\ses (see
\cite[Chapter~4]{DP} for a description of the digital method). Rather, our \con provides a direct and explicit formula for the terms of
the \se (see equation~\eqref{eqc} in Section~\ref{se3}).

The rest of the paper is organized as follows. In Section~\ref{se2} we present the necessary background on $(u,\bse,s)$-sequences.
Section~\ref{se3} contains the \con of our Halton-type sequences. In Section~\ref{se4} we show that these \ses are both $(u,\bse,s)$-\ses
and $(t,s)$-\ses with suitable parameters $u$, $\bse$, and $t$. Upper bounds on the star \dis of these \ses are discussed in
Section~\ref{se5}.

\section{Background on $(u,\bse,s)$-sequences} \label{se2}

Tezuka~\cite{T12} recently introduced the concepts of $(u,m,\bse,s)$-nets and $(u,\bse,s)$-sequences. For several reasons, e.g., to
prove Proposition~\ref{pr1} below and to avoid unnecessary additional conditions, his definitions were slightly revised by Hofer
and Niederreiter~\cite{HN}. We follow this revised approach.

\begin{definition} \label{de1} {\rm
Let $b\ge 2$, $s\ge 1$, and $0\le u \le m$ be integers and let $\bse=(e_1,\ldots, e_s) \in \N^s$ be an $s$-tuple of positive integers.
A \emph{$(u,m,\bse,s)$-net in base $b$} is a point set $\cP$ of $b^m$ points in $[0,1)^s$ such that $A(J;\cP)=b^m \lambda_s(J)$ for every
interval $J$ of the form
\begin{equation} \label{eqi}
J=\prod_{i=1}^s\left[a_ib^{-d_i},(a_i+1)b^{-d_i}\right)
\end{equation}
with integers $d_i\ge 0$, $0 \le a_i < b^{d_i}$, and $e_i|d_i$ for $1\le i\le s$ and with $\lambda_s(J)\ge b^{u-m}$.}
\end{definition}

The classical concept of a $(u,m,s)$-net in base $b$ corresponds to the special case $\bse =(1,\ldots,1)$ in Definition~\ref{de1}.
For a real number $x \in [0,1]$, let
$$
x=\sum_{j=1}^{\infty} y_jb^{-j} \qquad \mbox{with all } y_j \in Z_b
$$
be a $b$-adic expansion of $x$, where the case $y_j=b-1$ for all sufficiently large $j$ is allowed. For any integer
$m \ge 1$, we define the truncation
$$
[x]_{b,m} = \sum_{j=1}^m y_jb^{-j}.
$$
If $\x =(x^{(1)},\ldots,x^{(s)}) \in [0,1]^s$, then the truncation $[\x]_{b,m}$ is defined coordinatewise, that is,
$$
[\x]_{b,m} = \left([x^{(1)}]_{b,m},\ldots,[x^{(s)}]_{b,m} \right).
$$

\begin{definition} \label{de2} {\rm
Let $b \ge 2$, $s \ge 1$, and $u \ge 0$ be integers and let $\bse \in \N^s$.
A sequence $\x_0,\x_1,\ldots$ of points in $[0,1]^s$ is called a \emph{$(u,\bse,s)$-sequence in base $b$} if for all
integers $k\ge 0$ and $m>u$
the points $[\x_n]_{b,m}$ with $kb^m\le n<(k+1)b^m$ form a $(u,m,\bse,s)$-net in base $b$.}
\end{definition}

The classical concept of a $(u,s)$-\se in base $b$ corresponds to the special case $\bse =(1,\ldots,1)$ in Definition~\ref{de2}.
The following result was shown in~\cite{HN}.

\begin{proposition} \label{pr1}
Let $b\ge 2$, $s\ge 1$, and $u\ge 0$ be integers and let $\bse=(e_1,\ldots,e_s)\in \N^s$.
Then any $(u,\bse,s)$-sequence in base $b$ is a $(t,s)$-sequence in base $b$ with
$$
t=u+\sum_{i=1}^s(e_i-1).
$$
\end{proposition}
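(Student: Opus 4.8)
The plan is to show directly that a $(u,\bse,s)$-sequence $\cS$ in base $b$ satisfies the defining property of a $(t,s)$-sequence with $t=u+\sum_{i=1}^s(e_i-1)$; that is, for every $m > t$ and every $k \ge 0$, the truncated points $[\x_n]_{b,m}$ with $kb^m \le n < (k+1)b^m$ form a $(t,m,s)$-net in base $b$. Fix such $m$ and $k$, and fix an elementary interval $J$ of the classical form \eqref{eqi} with $\bse=(1,\ldots,1)$, i.e. $J=\prod_{i=1}^s[a_ib^{-d_i},(a_i+1)b^{-d_i})$ with $d_i \ge 0$, $0 \le a_i < b^{d_i}$, and $\lambda_s(J)=b^{-\sum d_i} \ge b^{t-m}$. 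We must verify $A(J;\cP)=b^m\lambda_s(J)$, where $\cP$ is the truncated point set.

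The key step is to enlarge $J$ to an interval $J'$ that is admissible for the $(u,\bse,s)$-structure. For each $i$, replace $d_i$ by $d_i' = e_i\lfloor d_i/e_i\rfloor \le d_i$, the largest multiple of $e_i$ not exceeding $d_i$; note $d_i - d_i' \le e_i - 1$. Let $a_i' = \lfloor a_i b^{-(d_i-d_i')}\rfloor$, so that the interval $J_i' = [a_i'b^{-d_i'},(a_i'+1)b^{-d_i'})$ contains the original one-dimensional factor $[a_ib^{-d_i},(a_i+1)b^{-d_i})$, and set $J' = \prod_{i=1}^s J_i'$. Then $J'$ has the form required in Definition~\ref{de1} since $e_i \mid d_i'$, and its volume is
$$
\lambda_s(J') = b^{-\sum_{i=1}^s d_i'} \ge b^{-\sum_{i=1}^s d_i - \sum_{i=1}^s(e_i-1)} = \lambda_s(J)\, b^{\sum_{i=1}^s(e_i-1)} \ge b^{t-m}\,b^{\sum_{i=1}^s(e_i-1)} = b^{u-m}.
$$
Hence by the $(u,\bse,s)$-sequence property, the point set $\cP$ is a $(u,m,\bse,s)$-net and satisfies $A(J';\cP) = b^m\lambda_s(J')$ exactly.

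It remains to pass from the exact count on $J'$ back to an exact count on $J$. Since $J \subseteq J'$, one has $A(J;\cP) \le A(J';\cP) = b^m\lambda_s(J')$, but this inequality alone is too weak. The standard remedy is to partition $J'$ into congruent copies of $J$: writing each $J_i'$ as the disjoint union of the $b^{d_i-d_i'}$ intervals of the form $[cb^{-d_i},(c+1)b^{-d_i})$ it contains, we decompose $J'$ into $b^{\sum_i(d_i-d_i')}$ pairwise disjoint elementary intervals, each a translate of $J$ with the same volume $\lambda_s(J) \ge b^{t-m} \ge b^{u-m}$, and each again of the admissible shape needed to apply the net property at the same refined level — more precisely, each such subinterval is itself contained in a $\bse$-admissible interval of volume $\ge b^{u-m}$ by the same enlargement argument, so $A$ on each equals $b^m$ times its volume. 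Summing these equalities over the $b^{\sum_i(d_i-d_i')}$ pieces and using additivity of $A(\cdot;\cP)$ recovers $A(J';\cP)=b^m\lambda_s(J')$ consistently, and isolating the single piece equal to $J$ gives $A(J;\cP)=b^m\lambda_s(J)$.

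The main obstacle is precisely this last passage: the $(u,\bse,s)$-net property only guarantees exact equidistribution on the \emph{coarse} $\bse$-admissible boxes, whereas a $(t,m,s)$-net requires it on \emph{all} elementary boxes of volume $\ge b^{t-m}$, including those whose side exponents $d_i$ are not multiples of $e_i$. Resolving it cleanly requires the observation that every such fine box, after enlarging each coordinate to the next multiple of $e_i$, still has volume at least $b^{u-m}$ thanks to the slack $t - u = \sum_{i=1}^s(e_i-1)$ built into $t$ — and then that the equidistribution on admissible boxes forces equidistribution on each fine sub-box, which is the delicate point where one must be careful that the net property is being applied at a single scale $m$ and invoked for a whole family of boxes simultaneously. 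I would present this via a short lemma stating that if $A(I;\cP)=b^m\lambda_s(I)$ for every $\bse$-admissible $I$ of volume $\ge b^{u-m}$, then the same holds for every elementary $I$ of volume $\ge b^{t-m}$, proved by downward induction on $\sum_i(d_i-d_i')$.
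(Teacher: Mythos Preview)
The paper does not supply its own proof of Proposition~\ref{pr1}; it simply cites the result from Hofer and Niederreiter~\cite{HN}. So there is no in-paper argument to compare against, only the correctness of your proposal to assess.

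Your proposal has a genuine gap, and you half-recognize it in your final paragraph without resolving it. The difficulty is that you round each exponent \emph{down}, replacing $d_i$ by $d_i'=e_i\lfloor d_i/e_i\rfloor\le d_i$, which \emph{enlarges} $J$ to an $\bse$-admissible box $J'$. You then partition $J'$ into $b^{\sum_i(d_i-d_i')}$ translates of $J$, each with the original exponents $d_1,\ldots,d_s$, and claim that the net property gives the exact count on each piece. But those pieces are precisely the boxes whose equidistribution you are trying to establish; the $(u,m,\bse,s)$-net property says nothing about them directly, and knowing the total count on $J'$ does not pin down the count on any single piece. Your proposed ``downward induction on $\sum_i(d_i-d_i')$'' does not escape this: passing from $J'$ to its sub-boxes always lands you back on boxes with the \emph{same} exponent defect as $J$, so the induction does not advance.

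The fix is to round the other way. Set $d_i''=e_i\lceil d_i/e_i\rceil\ge d_i$, so that $d_i''-d_i\le e_i-1$ and hence
\[
\sum_{i=1}^s d_i'' \le \sum_{i=1}^s d_i + \sum_{i=1}^s(e_i-1) \le (m-t)+(t-u)=m-u.
\]
Now $J$ itself is the disjoint union of $b^{\sum_i(d_i''-d_i)}$ sub-boxes with exponents $d_1'',\ldots,d_s''$; each of these is $\bse$-admissible and has volume $b^{-\sum d_i''}\ge b^{u-m}$, so the $(u,m,\bse,s)$-net property applies directly to each and gives exactly $b^{m-\sum d_i''}$ points. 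Summing over the pieces yields $A(J;\cP)=b^{m-\sum d_i}=b^m\lambda_s(J)$, and the proof is complete in one step with no induction needed. The slack $t-u=\sum_i(e_i-1)$ is used precisely to guarantee that the \emph{refined} boxes, not the enlarged ones, remain above the volume threshold.
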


For $\bse \ne (1,\ldots,1)$, there are currently four families of \ses for which it has been verified that they are $(u,\bse,s)$-sequences:
the Niederreiter \ses constructed in~\cite{N88}, the generalized Niederreiter \ses constructed in~\cite{T93}, the Niederreiter-Xing
\ses constructed in~\cite{XN}, and the Hofer-Niederreiter \ses constructed in~\cite{HN}. The proof of the property of being
$(u,\bse,s)$-\ses is given in~\cite{T12} for Niederreiter \ses and generalized Niederreiter \ses and in~\cite{HN} for Niederreiter-Xing
\ses and Hofer-Niederreiter sequences.

\section{The construction} \label{se3}

As we have already mentioned, our new \con of low-\dis \ses is based on global function fields. Recall that a \emph{\gff} is an
algebraic function field of one variable over a finite field. We follow the monograph~\cite{St} regarding terminology and notation
for global function fields.

We write $\F_q$ for the finite field with $q$ elements, where $q$ is an arbitrary prime power. Let $F$ be a \gff with full constant
field $\F_q$, that is, with $\F_q$ algebraically closed in $F$. We assume that $F$ has at least one rational place, that is, a place of
degree $1$. Given a dimension $s \ge 1$, we choose $s+1$ distinct places $P_{\infty},P_1,\ldots,P_s$ of $F$ with
$\deg(P_{\infty})=1$. The degrees of the places $P_1,\ldots,P_s$ are arbitrary and we put $e_i=\deg(P_i)$ for $1 \le i \le s$.
Denote by $O_F$ the holomorphy ring given by
$$
O_F=\bigcap_{P \ne P_{\infty}} O_P,
$$
where the intersection is extended over all places $P \ne P_{\infty}$ of $F$ and $O_P$ is the valuation ring of $P$.
For each $i=1,\ldots,s$, let $\wp_i$ be the maximal ideal of $O_F$ corresponding to $P_i$. Then the residue class field
$F_{P_i} := O_F/\wp_i$ has order $q^{e_i}$ (see \cite[Proposition 3.2.9]{St}). We fix a bijection
\begin{equation} \label{eqsp}
\sigma_{P_i}: F_{P_i} \to Z_{q^{e_i}}.
\end{equation}

For any divisor $D$ of $F$, we write $\cL(D)$ for the Riemann-Roch space associated to $D$ and $\ell(D)$ for the
dimension of the vector space $\cL(D)$ over $\F_q$ (see \cite[Section~1.4]{St}). For each $i=1,\ldots,s$, we can obtain a
local parameter $z_i \in O_F$ at $\wp_i$, by applying the Riemann-Roch theorem \cite[Theorem 1.5.17]{St} and choosing
$$
z_i \in \cL(kP_{\infty} -P_i) \setminus \cL(kP_{\infty} -2P_i)
$$
for a suitably large integer $k$. Then for every $f \in O_F$, we have a local expansion of $f$ at $\wp_i$ of the form
\begin{equation} \label{eqe}
f=\sum_{j=0}^{\infty} f_{i,j} z_i^j \qquad \mbox{with all } f_{i,j} \in F_{P_i}.
\end{equation}
Compare with \cite[pp. 5--6]{NX01} for a detailed description of how to obtain a local expansion.
Now we define the map $\phi: O_F \to [0,1]^s$ by
\begin{equation} \label{eqp}
\phi(f)=\left(\sum_{j=0}^{\infty} \sigma_{P_1}(f_{1,j}) (q^{e_1})^{-j-1},\ldots,\sum_{j=0}^{\infty} \sigma_{P_s}(f_{s,j})
(q^{e_s})^{-j-1} \right) \qquad \mbox{for all } f \in O_F.
\end{equation}
The map $\phi$ can be viewed as an $s$-dimensional radical-inverse function for $O_F$.

We arrange the elements of $O_F$ into a sequence by using the fact that
$$
O_F = \bigcup_{m=0}^{\infty} \cL(mP_{\infty}).
$$
The terms of this \se are denoted by $f_0,f_1,\ldots$ and they are obtained as follows. Consider the chain
$$
\cL(0) \subseteq \cL(P_{\infty}) \subseteq \cL(2P_{\infty}) \subseteq \cdots
$$
of vector spaces over $\F_q$. At each step of this chain, the dimension either remains the same or increases by $1$.
From a certain point on, the dimension always increases by $1$ according to the Riemann-Roch theorem. Thus, we can
construct a sequence $v_0,v_1,\ldots$ of elements of $O_F$ such that for each $m \in \N_0$,
$$
\{v_0,v_1,\ldots,v_{\ell(mP_{\infty})-1} \}
$$
is an $\F_q$-basis of $\cL(mP_{\infty})$. For $n \in \N_0$, let
$$
n=\sum_{r=0}^{\infty} a_r(n)q^r \qquad \mbox{with all } a_r(n) \in Z_q
$$
be the digit expansion of $n$ in base $q$. Note that $a_r(n)=0$ for all sufficiently large $r$. We fix a bijection
$\eta: Z_q \to \F_q$ with $\eta(0)=0$. Then we define
\begin{equation} \label{eqf}
f_n=\sum_{r=0}^{\infty} \eta(a_r(n))v_r \in O_F \qquad \mbox{for } n=0,1,\ldots .
\end{equation}
Note that the sum above is finite since for each $n \in \N_0$ we have $\eta(a_r(n))=0$ for all sufficiently large $r$.

Using~\eqref{eqp} and~\eqref{eqf}, we now define the sequence $\x_0,\x_1,\ldots$ of points in $[0,1]^s$ by
\begin{equation} \label{eqc}
\x_n = \phi(f_n) \qquad \mbox{for } n=0,1,\ldots .
\end{equation}
This is our Halton-type sequence obtained from the global function field $F$.
We will show that this sequence is a $(u,\bse,s)$-sequence in base $q$ for some $u$ and $\bse$ (see Theorem~\ref{th1} below)
and also a $(t,s)$-\se in base $q$ for some $t$ (see Corollary~\ref{co1} below).

\begin{remark} \label{re1} {\rm
The \se $\x_0,\x_1,\ldots$ is obtained by the direct and explicit formula~\eqref{eqc} and it is easy to see that, in general, it
cannot be produced by the digital \me for the \con of $(t,s)$-sequences. The simple reason is that the digital \me for the \con of
$(t,s)$-\ses in base $q$ is a linear-algebra technique which works entirely in the \fif $\F_q$ (compare again with \cite[Chapter~4]{DP}),
whereas our \con uses extension fields of $\F_q$ of degrees $e_1,\ldots,e_s$. Moreover, whenever $e_i \ge 2$ there is no reasonable
way in which the bijection $\sigma_{P_i}$ in~\eqref{eqsp} can be interpreted as an $\F_q$-linear map. There is only one situation
in which our \con can be put into the framework of the digital method, namely when $q$ is a prime number and $e_i=1$ for $1 \le i \le s$.
In this case the bijections $\sigma_{P_i}$ in~\eqref{eqsp} can be chosen as identity maps. }
\end{remark}

\begin{remark} \label{re2} {\rm
Tezuka~\cite{T93} introduced a \con of Halton-type \ses based on rational function fields over finite fields. If we specialize our
\con to the case where $F$ is the rational function field over $\F_q$, then our \con is in general different from Tezuka's
construction. One significant difference is that Tezuka's \con can be described in terms of the digital method, whereas our \con
cannot in general be put into the framework of the digital \me (see Remark~\ref{re1}). }
\end{remark}

\section{The main result} \label{se4}

Before we formulate our main result in Theorem~\ref{th1} below, we prove the following lemma. We keep the notation from the
previous sections.

\begin{lemma} \label{le1}
Let $g$ be the genus of $F$ and let $k \ge 0$ and $m \ge g$ be integers. Let the elements $f_n \in O_F$ be defined by~\eqref{eqf}.
Then for $f \in O_F$ we have $f=f_n$ for some
integer $n$ with $kq^m \le n < (k+1)q^m$ if and only if $f=h+c$ with $h \in O_F$ depending only on $k$ and $m$ and with
$c \in \cL((m+g-1)P_{\infty})$.
\end{lemma}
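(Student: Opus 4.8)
The plan is to analyze the digit expansion $n=\sum_{r\ge 0}a_r(n)q^r$ in blocks: the low-order block $r<m$ and the high-order block $r\ge m$. Writing $n = kq^m + n_0$ with $0\le n_0 < q^m$, we have $a_r(n) = a_r(n_0)$ for $0\le r < m$ and $a_r(n) = a_{r-m}(k)$ for $r\ge m$. Plugging this into the defining formula~\eqref{eqf} and using that $\eta$ is additive only as a set map (not $\F_q$-linear in $n$, but the sum in~\eqref{eqf} splits term by term), I would set
$$
h := \sum_{r\ge m}\eta(a_r(n))v_r = \sum_{r\ge 0}\eta(a_r(k))v_{m+r},
$$
which depends only on $k$ and $m$, and
$$
c := \sum_{r=0}^{m-1}\eta(a_r(n_0))v_r,
$$
so that $f_n = h + c$. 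Thus $n$ ranges over $kq^m \le n < (k+1)q^m$ precisely when $n_0$ ranges over $\{0,1,\dots,q^m-1\}$, i.e. when $(a_0(n_0),\dots,a_{m-1}(n_0))$ ranges over all of $Z_q^m$.

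The key point is then to identify the set $\{c : 0\le n_0 < q^m\}$ with the Riemann–Roch space $\cL((m+g-1)P_\infty)$. First I would observe that each $v_r$ with $r \le m-1$ lies in $\cL((m-1)P_\infty) \subseteq \cL((m+g-1)P_\infty)$ (since $v_r \in \cL(rP_\infty)$ by construction and the spaces are nested), so every such $c$ lies in $\cL((m+g-1)P_\infty)$. For the reverse inclusion, this is where the hypothesis $m \ge g$ and the Riemann–Roch theorem enter: by Riemann–Roch, $\ell((m+g-1)P_\infty) = (m+g-1) + 1 - g = m$ once $m + g - 1 \ge 2g - 1$, i.e. $m \ge g$. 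Moreover, for $m \ge g$ the dimension increases by exactly one at each step of the chain $\cL((m-1)P_\infty)\subseteq \cL(mP_\infty)\subseteq\cdots$, so by the construction of the $v_r$ the set $\{v_0,v_1,\dots,v_{m-1}\}$ is an $\F_q$-basis of $\cL((m+g-1)P_\infty)$: indeed $\ell((m+g-1)P_\infty)=m$ means this space is spanned by the first $m$ of the $v_r$, which are linearly independent. Hence as $(\eta(a_0(n_0)),\dots,\eta(a_{m-1}(n_0)))$ ranges over all of $\F_q^m$ (using that $\eta$ is a bijection onto $\F_q$), the element $c = \sum_{r=0}^{m-1}\eta(a_r(n_0))v_r$ ranges over exactly $\cL((m+g-1)P_\infty)$. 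This establishes the equivalence.

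I expect the main obstacle to be the bookkeeping around the genus: one must check carefully that $\ell((m+g-1)P_\infty) = m$ for all $m \ge g$ — including the boundary case $m=g$, where $(m+g-1)P_\infty = (2g-1)P_\infty$ sits right at the edge of the Riemann–Roch regime — and that the chain of Riemann–Roch spaces has already entered its "strictly increasing" phase at index $m-1$, so that the first $m$ vectors $v_0,\dots,v_{m-1}$ genuinely form a basis of $\cL((m+g-1)P_\infty)$ rather than of some smaller or larger space. Once the correspondence $n_0 \leftrightarrow c$ is pinned down, the "only if" and "if" directions both follow immediately from the decomposition $f_n = h + c$ together with the fact that $h$ is completely determined by $k$ and $m$.
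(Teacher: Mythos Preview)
Your approach is essentially the same as the paper's: split the digit expansion of $n$ at position $m$, let $h$ be the high-order part and $c$ the low-order part, then identify the range of $c$ with $\cL((m+g-1)P_\infty)$ via Riemann--Roch giving $\ell((m+g-1)P_\infty)=m$ when $m\ge g$.

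One slip to correct: the parenthetical ``since $v_r\in\cL(rP_\infty)$ by construction'' is false for $g>0$ (e.g.\ for genus $1$ one has $\ell(P_\infty)=1$, so $v_1\notin\cL(P_\infty)$), and likewise the chain $\cL((m-1)P_\infty)\subseteq\cL(mP_\infty)\subseteq\cdots$ need not be strictly increasing until the degree reaches $2g-1$, not $m-1$. Neither auxiliary claim is needed. The defining property of the $v_r$ already says that $\{v_0,\ldots,v_{\ell(jP_\infty)-1}\}$ is an $\F_q$-basis of $\cL(jP_\infty)$ for \emph{every} $j$; applying this with $j=m+g-1$ and the Riemann--Roch computation $\ell((m+g-1)P_\infty)=m$ immediately gives that $\{v_0,\ldots,v_{m-1}\}$ is a basis of $\cL((m+g-1)P_\infty)$, which is exactly how the paper argues. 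Drop the intermediate containment in $\cL((m-1)P_\infty)$ and the argument is clean.
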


\begin{proof}
The integers $n$ with $kq^m \le n < (k+1)q^m$ have digit expansions in base $q$ of the form
$$
n=\sum_{r=0}^{m-1} a_r(n)q^r + \sum_{r=m}^{\infty} a_r(n)q^r,
$$
where the $a_r(n)$ with $0 \le r \le m-1$ range independently over $Z_q$ and the $a_r(n)$ with $r \ge m$ depend
only on $k$. Thus, the corresponding $f_n$ have the form
\begin{equation} \label{eqs}
f_n=\sum_{r=0}^{m-1} \eta(a_r(n))v_r + \sum_{r=m}^{\infty} \eta(a_r(n))v_r =: \sum_{r=0}^{m-1} \alpha_r v_r +h,
\end{equation}
where the $\alpha_r =\eta(a_r(n))$ with $0 \le r \le m-1$ range independently over $\F_q$ and $h \in O_F$ depends only
on $k$ and $m$.

Now we consider $\cL((m+g-1)P_{\infty})$ and note that $\deg((m+g-1)P_{\infty})=m+g-1 \ge 2g-1$. Therefore
$\ell((m+g-1)P_{\infty})=m$ by the Riemann-Roch theorem. It follows that $\{v_0,v_1,\ldots,v_{m-1}\}$ is an $\F_q$-basis
of $\cL((m+g-1)P_{\infty})$, and so the linear combinations $\sum_{r=0}^{m-1} \alpha_r v_r$ range exactly over
$\cL((m+g-1)P_{\infty})$. In view of~\eqref{eqs}, the proof is complete.
\end{proof}

\begin{theorem} \label{th1}
The sequence $\x_0,\x_1,\ldots$ given by~\eqref{eqc} is a $(u,\bse,s)$-sequence in base $q$ with $u=g$ and $\bse =(e_1,\ldots,e_s)$,
where $g$ is the genus of $F$ and $e_i=\deg(P_i)$ for $1 \le i \le s$.
\end{theorem}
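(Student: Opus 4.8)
The plan is to verify Definition~\ref{de2} directly: fix integers $k \ge 0$ and $m > u = g$, and show that the points $[\x_n]_{q,m}$ with $kq^m \le n < (k+1)q^m$ form a $(g,m,\bse,s)$-net in base $q$. By Lemma~\ref{le1}, as $n$ ranges over this block, $f_n$ ranges over the coset $h + \cL((m+g-1)P_\infty)$ for a fixed $h$ depending only on $k$ and $m$. So the task reduces to a statement about how the map $\phi$ of~\eqref{eqp} distributes an affine translate of a Riemann-Roch space among the elementary intervals~\eqref{eqi} that are admissible in Definition~\ref{de1}, namely intervals $J = \prod_{i=1}^s [a_i q^{-d_i}, (a_i+1)q^{-d_i})$ with $e_i \mid d_i$ and $\lambda_s(J) \ge q^{g-m}$.

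First I would unwind what membership $[\x_n]_{q,m} \in J$ means coordinatewise. Writing $d_i = e_i c_i$ for nonnegative integers $c_i$, the $i$th coordinate of $\phi(f_n)$ is $\sum_{j \ge 0} \sigma_{P_i}(f_{n;i,j})(q^{e_i})^{-j-1}$, and because $\sigma_{P_i}$ takes values in $Z_{q^{e_i}} = \{0,\dots,q^{e_i}-1\}$, this is an honest base-$q^{e_i}$ expansion. Hence the condition that the $i$th coordinate lie in $[a_i q^{-e_i c_i}, (a_i+1)q^{-e_i c_i})$ is exactly a prescription of the first $c_i$ digits $f_{n;i,0}, \dots, f_{n;i,c_i-1}$ of the local expansion of $f_n$ at $\wp_i$ — equivalently, via the bijections $\sigma_{P_i}$ and $\eta$, a prescription of the image of $f_n$ in $O_F / \wp_i^{c_i}$. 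The truncation $[\cdot]_{q,m}$ only discards digits beyond the $m$th overall, so I must check that for admissible $J$ the truncation does not interfere; the constraint $\lambda_s(J) = q^{-\sum_i e_i c_i} \ge q^{g-m}$, i.e. $\sum_{i=1}^s e_i c_i \le m - g$, is precisely what guarantees this, and I would record that bookkeeping carefully.

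The heart of the argument is then a counting/dimension computation. Consider the $\F_q$-linear map $\Psi: \cL((m+g-1)P_\infty) \to \bigoplus_{i=1}^s O_F/\wp_i^{c_i}$ sending $c \mapsto (c \bmod \wp_i^{c_i})_{i=1}^s$. The target has $\F_q$-dimension $\sum_{i=1}^s e_i c_i \le m - g$, while the source has dimension $m$ (as computed in Lemma~\ref{le1}). I claim $\Psi$ is surjective: the kernel is $\cL((m+g-1)P_\infty - \sum_{i=1}^s c_i P_i)$, a divisor of degree $m + g - 1 - \sum_i e_i c_i \ge 2g - 1$, so by Riemann-Roch its dimension is $m + g - 1 - \sum_i e_i c_i - g + 1 = m - \sum_i e_i c_i$, whence $\dim(\mathrm{im}\,\Psi) = m - (m - \sum_i e_i c_i) = \sum_i e_i c_i = \dim(\text{target})$. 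Surjectivity means every prescribed tuple of residues is attained equally often — on exactly $q^{m - \sum_i e_i c_i}$ elements of $\cL((m+g-1)P_\infty)$, hence on exactly that many $f_n$ in the block after translating by $h$ (translation by $h$ just shifts which residue tuple corresponds to which $J$, preserving the uniformity). Since the block has $q^m$ points and there are $q^{\sum_i e_i c_i}$ intervals $J$ of this shape, each receives $q^m / q^{\sum_i e_i c_i} = q^m \lambda_s(J)$ points, which is exactly the net property. I expect the main obstacle to be the careful reconciliation of the truncation operator $[\cdot]_{q,m}$ with the digit-prescription picture — in particular ruling out boundary effects from the nonuniqueness of $q^{e_i}$-adic expansions (the trailing-$(q^{e_i}-1)$ case) and confirming that the inequality $\sum_i e_i c_i \le m - g$ makes those effects invisible; the Riemann-Roch computation itself is routine once set up.
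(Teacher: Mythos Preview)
Your proposal is correct and follows essentially the same approach as the paper: reduce via Lemma~\ref{le1} to a coset $h+\cL((m+g-1)P_\infty)$, translate the condition $[\phi(f)]_{q,m}\in J$ into prescribed residues in $\prod_i O_F/\wp_i^{d_i/e_i}$, and count solutions by proving surjectivity of the natural $\F_q$-linear map via a Riemann-Roch dimension computation on $\cL\big((m+g-1)P_\infty-\sum_i(d_i/e_i)P_i\big)$. The truncation bookkeeping you flag is handled in the paper exactly as you anticipate, using $d_i\le\sum_j d_j\le m-g<m$ so that the first $d_i$ base-$q$ digits are unaffected by $[\cdot]_{q,m}$.
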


\begin{proof}
We proceed by Definition~\ref{de2} and fix integers $k \ge 0$ and $m > g$.
We have to show that the point set $\cP_{k,m}$ consisting of the points $[\x_n]_{q,m} = [\phi(f_n)]_{q,m}$ with $kq^m \le n < (k+1)q^m$
forms a $(g,m,\bse,s)$-net in base $q$.

Let $J$ be an interval of the form~\eqref{eqi} with $\lambda_s(J) \ge q^{g-m}$, that is,
\begin{equation} \label{eqd}
\sum_{i=1}^s d_i \le m-g.
\end{equation}
Note that we assume $e_i|d_i$ for $1 \le i \le s$ according to Definition~\ref{de1}.
Let $f \in O_F$ with local expansions as in~\eqref{eqe}. Then $[\phi(f)]_{q,m} \in J$ if and only if
$$
\left[\sum_{j=1}^{\infty} \sigma_{P_i}(f_{i,j-1}) (q^{e_i})^{-j} \right]_{q,m} \in [a_iq^{-d_i},(a_i+1)q^{-d_i} )
\qquad \mbox{for } 1 \le i \le s.
$$
This is equivalent to
$$
w_i := \sum_{j=1}^{d_i/e_i} \sigma_{P_i}(f_{i,j-1}) q^{-je_i} \in [a_iq^{-d_i},(a_i+1)q^{-d_i} )
\qquad \mbox{for } 1 \le i \le s.
$$
This condition is satisfied if and only if, for $1 \le i \le s$, the first $d_i$ $q$-adic digits of $w_i$ agree with
the first $d_i$ $q$-adic digits of $a_iq^{-d_i}$ and thus have fixed values depending only on $J$.
Note that these $d_i$ fixed $q$-adic digits of $w_i$ correspond to $d_i/e_i$ fixed $q^{e_i}$-adic digits. Therefore
for some $a_{i,j} \in Z_{q^{e_i}}$, $1 \le j \le d_i/e_i$, $1 \le i \le s$, we get the condition
$$
\sigma_{P_i}(f_{i,j-1})=a_{i,j} \qquad \mbox{for } 1 \le j \le d_i/e_i, \ 1 \le i \le s,
$$
or the equivalent condition
\begin{equation} \label{eqb}
f_{i,j-1} = \sigma_{P_i}^{-1}(a_{i,j}) =: \beta_{i,j} \in F_{P_i} \qquad \mbox{for } 1 \le j \le d_i/e_i, \ 1 \le i \le s.
\end{equation}
Now we recall~\eqref{eqe} and put
$$
h_i=\sum_{j=0}^{(d_i/e_i)-1} \beta_{i,j+1} z_i^j \in O_F \qquad \mbox{for } 1 \le i \le s.
$$
Then~\eqref{eqb} is equivalent to
\begin{equation} \label{eqa}
f \equiv h_i \ {\rm mod} \ \wp_i^{d_i/e_i} \qquad \mbox{for } 1 \le i \le s.
\end{equation}
Note that the elements $h_1,\ldots,h_s$ depend only on $J$.

We consider now the number $A(J;\cP_{k,m})$ of integers $n$ with $kq^m \le n < (k+1)q^m$ such that $[\phi(f_n)]_{q,m} \in J$. As we have just seen,
the latter condition is equivalent to~\eqref{eqa} holding for $f=f_n$. By Lemma~\ref{le1} the condition $kq^m \le n < (k+1)q^m$ is
equivalent to $f_n=h+c$ with fixed $h \in O_F$ and some $c \in \cL((m+g-1)P_{\infty})$. Thus, both conditions hold
simultaneously if and only if $c \in \cL((m+g-1)P_{\infty})$ satisfies
\begin{equation} \label{eqh}
c \equiv h_i-h \ {\rm mod} \ \wp_i^{d_i/e_i} \qquad \mbox{for } 1 \le i \le s.
\end{equation}
Consider the map
$$
\psi: \cL((m+g-1)P_{\infty}) \to R := (O_F/\wp_1^{d_1/e_1}) \times \cdots \times (O_F/\wp_s^{d_s/e_s})
$$
defined by
$$
\psi(c)=(c \ {\rm mod} \ \wp_1^{d_1/e_1},\ldots,c \ {\rm mod} \ \wp_s^{d_s/e_s}) \qquad \mbox{for } c \in \cL((m+g-1)P_{\infty}).
$$
Note that $\psi$ is a linear transformation between vector spaces over $\F_q$. We claim that $\psi$ is surjective. To prove this, it
suffices to show that
$$
\dim(\cL((m+g-1)P_{\infty})/{\rm ker}(\psi)) = \dim(R).
$$
The dimension $\dim(R)$ is easily computed to be
$$
\dim(R)=\sum_{i=1}^s \dim(O_F/\wp_i^{d_i/e_i})=\sum_{i=1}^s d_i.
$$
From the definition of $\psi$ it is clear that
\begin{equation} \label{eqk}
{\rm ker}(\psi)=\cL \Big((m+g-1)P_{\infty} -\sum_{i=1}^s (d_i/e_i)P_i \Big).
\end{equation}
Furthermore, by taking into account~\eqref{eqd} we get
\begin{equation} \label{eqr}
\deg \Big((m+g-1)P_{\infty} -\sum_{i=1}^s (d_i/e_i)P_i \Big) = m+g-1-\sum_{i=1}^s d_i \ge 2g-1.
\end{equation}
Therefore
\begin{eqnarray*}
\dim(\cL((m+g-1)P_{\infty})/{\rm ker}(\psi)) & = & \dim \Big(\cL((m+g-1)P_{\infty})/\cL \Big((m+g-1)P_{\infty}
-\sum_{i=1}^s (d_i/e_i)P_i \Big) \Big) \\
& = & \deg \Big(\sum_{i=1}^s (d_i/e_i)P_i \Big) = \sum_{i=1}^s d_i
\end{eqnarray*}
by the Riemann-Roch theorem, and so $\psi$ is indeed surjective.

Since $\psi$ is surjective, the number of $c \in \cL((m+g-1)P_{\infty})$ satisfying the system of congruences
in~\eqref{eqh} is equal to $\# \, {\rm ker}(\psi)$. By~\eqref{eqk}, \eqref{eqr}, and the Riemann-Roch theorem we get
\begin{eqnarray*}
\dim({\rm ker}(\psi)) & = & \ell \Big((m+g-1)P_{\infty} -\sum_{i=1}^s (d_i/e_i)P_i \Big) \\
& = & \deg \Big((m+g-1)P_{\infty} -\sum_{i=1}^s (d_i/e_i)P_i \Big) +1-g = m - \sum_{i=1}^s d_i.
\end{eqnarray*}
Thus, we have shown that
$$
A(J;\cP_{k,m})= \# \, {\rm ker}(\psi)=q^{m-\sum_{i=1}^s d_i}.
$$
On the other hand, we have
$$
q^m \lambda_s(J)=q^{m-\sum_{i=1}^s d_i}
$$
by the form of $J$, and so $\cP_{k,m}$ is indeed a $(g,m,\bse,s)$-net in base $q$ according to Definition~\ref{de1}.
\end{proof}

\begin{corollary} \label{co1}
The \se $\x_0,\x_1,\ldots$ given by~\eqref{eqc} is a $(t,s)$-\se in base $q$ with
$$
t=g+\sum_{i=1}^s (e_i-1),
$$
where $g$ is the genus of $F$ and $e_i=\deg(P_i)$ for $1 \le i \le s$.
\end{corollary}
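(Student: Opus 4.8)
The plan is to derive Corollary~\ref{co1} directly from Theorem~\ref{th1} by invoking the structural result in Proposition~\ref{pr1}. By Theorem~\ref{th1}, the sequence $\x_0,\x_1,\ldots$ given by~\eqref{eqc} is a $(u,\bse,s)$-sequence in base $q$ with $u=g$ and $\bse=(e_1,\ldots,e_s)$, where $g$ is the genus of $F$ and $e_i=\deg(P_i)$ for $1 \le i \le s$. This is precisely the hypothesis required to apply Proposition~\ref{pr1} with $b=q$.

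The one point that needs a moment's care is the base: Proposition~\ref{pr1} is stated for an arbitrary integer base $b \ge 2$, and since $q$ is a prime power we have $q \ge 2$, so the proposition applies verbatim with $b=q$. Applying it, we conclude that the sequence is a $(t,s)$-sequence in base $q$ with
$$
t = u + \sum_{i=1}^s (e_i-1) = g + \sum_{i=1}^s (e_i-1),
$$
which is exactly the assertion of the corollary. I would write the proof as essentially a one-line deduction: combine Theorem~\ref{th1} with Proposition~\ref{pr1}, substituting $u=g$.

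There is really no obstacle here; the corollary is a routine consequence of the two results already established, and the only substantive work — verifying the net property for the interval $J$ and computing $A(J;\cP_{k,m})$ via the surjectivity of the reduction map $\psi$ and the Riemann--Roch theorem — was carried out in the proof of Theorem~\ref{th1}. Thus the proof of the corollary amounts to no more than citing Proposition~\ref{pr1}.
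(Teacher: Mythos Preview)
Your proposal is correct and matches the paper's own proof exactly: the paper simply states that the corollary follows from Theorem~\ref{th1} and Proposition~\ref{pr1}. Your additional remark that $q\ge 2$ (so Proposition~\ref{pr1} applies with $b=q$) is a harmless clarification.
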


\begin{proof}
This follows from Theorem~\ref{th1} and Proposition~\ref{pr1}.
\end{proof}

\begin{remark} \label{re3} {\rm
The Niederreiter-Xing \ses constructed in~\cite{XN} use the same ingredients as our construction, to wit a \gff $F$ with full
constant field $\F_q$ and $s+1$ distinct places $P_{\infty},P_1,\ldots,P_s$ of $F$ with $\deg(P_{\infty})=1$ and the degrees
$e_i=\deg(P_i)$ for $1 \le i \le s$ being arbitrary. It was shown in~\cite{XN} that a Niederreiter-Xing \se with these data is
a $(t,s)$-\se in base $q$ with the same value of $t$ as in Corollary~\ref{co1}. Furthermore, Hofer and Niederreiter~\cite{HN}
proved that the same Niederreiter-Xing \se is also a $(g,\bse,s)$-\se in base $q$ with $\bse =(e_1,\ldots,e_s)$, that is, with
the same parameters as in Theorem~\ref{th1}. Thus, the Niederreiter-Xing \se and our new Halton-type \se have a similar
distribution behavior. However, it should be pointed out that our new \con is considerably simpler than the one in~\cite{XN}.
Since the Niederreiter-Xing \ses and our new Halton-type \ses have the same parameters, the usual procedure of optimizing the
$t$-value for Niederreiter-Xing \ses (see~\cite{NX96} and \cite[Section~8.3]{NX01}) applies in the same way to our new Halton-type
sequences. Consequently, our new \con yields $(t,s)$-\ses in base $q$ such that, for fixed $q$, the parameter $t$ grows linearly
in $s$ as $s \to \infty$ and is therefore asymptotically optimal. }
\end{remark}

\section{Discrepancy bounds} \label{se5}

The star \dis $D_N^*(\cS)$ of any $(t,s)$-\se $\cS$ in any base $b \ge 2$ satisfies an upper bound of the form~\eqref{eqdi}.
In fact, we have
\begin{equation} \label{eq51}
D_N^*(\cS) \le CN^{-1} (\log N)^s + O(N^{-1} (\log N)^{s-1}) \qquad \mbox{for all } N \ge 2
\end{equation}
with the constant $C > 0$ and the implied constant in the Landau symbol depending only on $b$, $s$, and $t$. This was first shown
in \cite[Section~4]{N87} (see also \cite[Theorem 4.17]{N92} for a convenient formulation). The currently best values of $C$ are those
of Faure and Kritzer~\cite{FK}, namely $C=C_{{\rm FK}}$ given by
$$
C_{{\rm FK}}=
\begin{cases}
\frac{b^{t}}{s!} \cdot \frac{b^2}{2(b^2-1)}\left(\frac{b-1}{2\log b}\right)^s & \text{if $b$ is even,} \\
\frac{b^{t}}{s!} \cdot \frac{1}{2}\left(\frac{b-1}{2\log b}\right)^s & \text{if $b$ is odd.}
\end{cases}
$$
It follows that for every base $b \ge 2$ we have
\begin{equation} \label{eq52}
C_{{\rm FK}} \ge \frac{b^t}{s!} \cdot \frac{1}{2} \left(\frac{b-1}{2 \log b} \right)^s.
\end{equation}

Tezuka~\cite{T12} established an upper bound on the star discrepancy
$D_N^*(\cS)$ of any $(u,\bse,s)$-sequence $\cS$ in base $b$. This bound is also of the form~\eqref{eq51} with the constant
$C=C_{{\rm Tez}}$ given by
\begin{equation} \label{eq53}
C_{{\rm Tez}} = \frac{b^u}{s!} \prod_{i=1}^s \frac{\left\lfloor b^{e_i}/2 \right\rfloor}{e_i \log b} \le
\frac{b^u}{s!} \cdot \frac{b^{e_1 + \cdots + e_s}}{(2 \log b)^s e_1 \cdots e_s}.
\end{equation}
Note that our Definition~\ref{de2} of a $(u,\bse,s)$-sequence in base $b$ is slightly stronger than Tezuka's original definition
in~\cite{T12}, and so Tezuka's discrepancy bound is obviously valid for our concept of a $(u,\bse,s)$-sequence in base $b$ as well.

Our new Halton-type sequences are $(t,s)$-sequences in base $q$ as well as $(u,\bse,s)$-sequences
in base $q$ with suitable $t$, $u$, and $\bse$. Therefore we can apply two versions of the discrepancy bound~\eqref{eq51}, namely
the one with $C=C_{{\rm FK}}$ and the one with $C=C_{{\rm Tez}}$. It is of interest to compare these two bounds. Recall
from Theorem~\ref{th1} and Corollary~\ref{co1} that for these \ses
we have $u=g$, the genus of the global function field $F$, as well as $\bse =(e_1,\ldots,e_s)$ and $t=g+ \sum_{i=1}^s
(e_i-1)$, where $e_i=\deg(P_i)$ for $1 \le i \le s$. Then from~\eqref{eq52} and~\eqref{eq53} we obtain
$$
\frac{C_{{\rm FK}}}{C_{{\rm Tez}}} \ge \frac{1}{2} \left(\frac{q-1}{q} \right)^s \prod_{i=1}^s e_i.
$$
Thus, if
\begin{equation} \label{eq54}
\prod_{i=1}^s e_i > 2 \left(\frac{q}{q-1} \right)^s,
\end{equation}
then the discrepancy bound with the constant $C_{{\rm Tez}}$ is better than the one with the constant $C_{{\rm FK}}$. The
condition~\eqref{eq54} will be satisfied in many cases. For instance, it was shown in~\cite{HN} that if we arrange all distinct places
$\ne P_{\infty}$ of $F$ into a list $P_1,P_2,\ldots$ in an arbitrary manner, then for any $0 < \varepsilon < 1/q$ we have
$$
\prod_{i=1}^s \deg(P_i) > (\log_q s)^{((1/q) -\varepsilon)s}
$$
for all sufficiently large $s$, where $\log_q$ denotes the logarithm to the base $q$. Hence in this situation the condition~\eqref{eq54}
is satisfied for all sufficiently large $s$, and so the \dis bound~\eqref{eq51} with the constant $C=C_{{\rm Tez}}$ is better than the
one with the constant $C=C_{{\rm FK}}$.

The discussion above shows that the theory of $(u,\bse,s)$-\ses is a useful extension of the theory of $(t,s)$-\ses since, for instance,
it can lead to improved \dis bounds in comparison with the \dis bounds for $(t,s)$-sequences.

\section*{Acknowledgment}

We are grateful to Professor Chaoping Xing of Nanyang Technological University for suggesting our collaboration and providing valuable
input.


\begin{thebibliography}{99}

\bibitem{CD}
Colbourn C J, Dinitz J H, eds. Handbook of Combinatorial Designs, 2nd ed. Boca Raton: CRC Press, 2006

\bibitem{DP}
Dick J, Pillichshammer F. Digital Nets and Sequences: Discrepancy Theory and Quasi-Monte Carlo Integration. Cambridge:
Cambridge University Press, 2010

\bibitem{FK}
Faure H, Kritzer, P. New star discrepancy bounds for $(t,m,s)$-nets and $(t,s)$-sequences. Preprint, 2012

\bibitem{Ha}
Halton J H. On the efficiency of certain quasi-random sequences of points in evaluating multi-dimensional integrals. Numer
Math, 1960, 2: 84--90

\bibitem{HN}
Hofer R, Niederreiter H. A construction of $(t,s)$-sequences with finite-row generating matrices using global function fields.
Preprint, 2012; available at http://arxiv.org/abs/1210.5152

\bibitem{MN}
Mayor D J S, Niederreiter H. A new construction of $(t,s)$-sequences and some improved bounds on their quality parameter.
Acta Arith, 2007, 128: 177--191

\bibitem{N87}
Niederreiter H. Point sets and sequences with small discrepancy. Monatsh Math, 1987, 104: 273--337

\bibitem{N88}
Niederreiter H. Low-discrepancy and low-dispersion sequences. J Number Theory, 1988, 30: 51--70

\bibitem{N92}
Niederreiter H. Random Number Generation and Quasi-Monte Carlo Methods. CBMS-NSF Regional Conference Series in Applied Math, 63.
Philadelphia: Soc Industrial Applied Math, 1992

\bibitem{NO}
Niederreiter H, \"Ozbudak F. Low-discrepancy sequences using duality and global function fields. Acta Arith, 2007, 130: 79--97

\bibitem{NX95}
Niederreiter H, Xing C P. Low-discrepancy sequences obtained from algebraic function fields over finite fields. Acta Arith, 1995,
72: 281--298

\bibitem{NX96}
Niederreiter H, Xing C P. Low-discrepancy sequences and global function fields with many rational places. Finite Fields Appl,
1996, 2: 241--273

\bibitem{NX96a}
Niederreiter H, Xing C P. Quasirandom points and global function fields. In: Cohen S, Niederreiter H, eds. Finite Fields and
Applications. London Math Soc Lecture Note Series, 233. Cambridge: Cambridge University Press, 1996. 269--296

\bibitem{NX98}
Niederreiter H, Xing C P. Nets, $(t,s)$-sequences, and algebraic geometry. In: Hellekalek P, Larcher G, eds. Random and Quasi-Random
Point Sets. Lecture Notes in Statistics, 138. New York: Springer, 1998. 267--302

\bibitem{NX01}
Niederreiter H, Xing C P. Rational Points on Curves over Finite Fields: Theory and Applications. London Math Soc Lecture Note
Series, 285. Cambridge: Cambridge University Press, 2001

\bibitem{St}
Stichtenoth  H. Algebraic Function Fields and Codes, 2nd ed. Berlin: Springer, 2009

\bibitem{T93}
Tezuka S. Polynomial arithmetic analogue of Halton sequences. ACM Trans Modeling Computer Simulation, 1993, 3: 99--107

\bibitem{T12}
Tezuka S. On the discrepancy of generalized Niederreiter sequences. Preprint, 2012

\bibitem{XN}
Xing C P, Niederreiter H. A construction of low-discrepancy sequences using global function fields. Acta Arith, 1995, 73: 87--102

\end{thebibliography}
\end{document}